\newcommand {\Omit}[1]{}
\tikzset{
>=stealth',
help lines/.style={dashed, thick},
axis/.style={<->},
important line/.style={thick},
connection/.style={thick, dotted},
}
\newlength{\hatchspread}
\newlength{\hatchthickness}
\tikzset{hatchspread/.code={\setlength{\hatchspread}{#1}},
         hatchthickness/.code={\setlength{\hatchthickness}{#1}}}
\tikzset{hatchspread=3pt,
         hatchthickness=0.4pt}
\pgfqpoint{\hatchspread}{\hatchspread}}
\newcommand{\nc}{\newcommand}
\nc{\rnc}{\renewcommand}
\nc{\bb}[1]{{\mathbb #1}}
\nc{\bbA}{\bb{A}}\nc{\bbB}{\bb{B}}\nc{\bbC}{\bb{C}}\nc{\bbD}{\bb{D}}
\nc{\bbE}{\bb{E}}\nc{\bbF}{\bb{F}}\nc{\bbG}{\bb{G}}\nc{\bbH}{\bb{H}}
\nc{\bbI}{\bb{I}}\nc{\bbJ}{\bb{J}}\nc{\bbK}{\bb{K}}\nc{\bbL}{\bb{L}}
\nc{\bbM}{\bb{M}}\nc{\bbN}{\bb{N}}\nc{\bbO}{\bb{O}}\nc{\bbP}{\bb{P}}
\nc{\bbQ}{\bb{Q}}\nc{\bbR}{\bb{R}}\nc{\bbS}{\bb{S}}\nc{\bbT}{\bb{T}}
\nc{\bbU}{\bb{U}}\nc{\bbV}{\bb{V}}\nc{\bbW}{\bb{W}}\nc{\bbX}{\bb{X}}
\nc{\bbY}{\bb{Y}}\nc{\bbZ}{\bb{Z}}
\nc{\mbf}[1]{{\mathbf #1}}
\nc{\bfA}{\mbf{A}}\nc{\bfB}{\mbf{B}}\nc{\bfC}{\mbf{C}}\nc{\bfD}{\mbf{D}}
\nc{\bfE}{\mbf{E}}\nc{\bfF}{\mbf{F}}\nc{\bfG}{\mbf{G}}\nc{\bfH}{\mbf{H}}
\nc{\bfI}{\mbf{I}}\nc{\bfJ}{\mbf{J}}\nc{\bfK}{\mbf{K}}\nc{\bfL}{\mbf{L}}
\nc{\bfM}{\mbf{M}}\nc{\bfN}{\mbf{N}}\nc{\bfO}{\mbf{O}}\nc{\bfP}{\mbf{P}}
\nc{\bfQ}{\mbf{Q}}\nc{\bfR}{\mbf{R}}\nc{\bfS}{\mbf{S}}\nc{\bfT}{\mbf{T}}
\nc{\bfU}{\mbf{U}}\nc{\bfV}{\mbf{V}}\nc{\bfW}{\mbf{W}}\nc{\bfX}{\mbf{X}}
\nc{\bfY}{\mbf{Y}}\nc{\bfZ}{\mbf{Z}}
\nc{\bfa}{\mbf{a}}\nc{\bfb}{\mbf{b}}\nc{\bfc}{\mbf{c}}\nc{\bfd}{\mbf{d}}
\nc{\bfe}{\mbf{e}}\nc{\bff}{\mbf{f}}\nc{\bfg}{\mbf{g}}\nc{\bfh}{\mbf{h}}
\nc{\bfi}{\mbf{i}}\nc{\bfj}{\mbf{j}}\nc{\bfk}{\mbf{k}}\nc{\bfl}{\mbf{l}}
\nc{\bfm}{\mbf{m}}\nc{\bfn}{\mbf{n}}\nc{\bfo}{\mbf{o}}\nc{\bfp}{\mbf{p}}
\nc{\bfq}{\mbf{q}}\nc{\bfr}{\mbf{r}}\nc{\bfs}{\mbf{s}}\nc{\bft}{\mbf{t}}
\nc{\bfu}{\mbf{u}}\nc{\bfv}{\mbf{v}}\nc{\bfw}{\mbf{w}}\nc{\bfx}{\mbf{x}}
\nc{\bfy}{\mbf{y}}\nc{\bfz}{\mbf{z}}
\newcommand{\op}{\text{op}}
\nc{\mcal}[1]{{\mathcal #1}}
\nc{\calA}{\mcal{A}}\nc{\calB}{\mcal{B}}\nc{\calC}{\mcal{C}}\nc{\calD}{\mcal{D}}
\nc{\calE}{\mcal{E}} \nc{\calF}{\mcal{F}}\nc{\calG}{\mcal{G}}\nc{\calH}{\mcal{H}}
\nc{\calI}{\mcal{I}}\nc{\calJ}{\mcal{J}}\nc{\calK}{\mcal{K}}\nc{\calL}{\mcal{L}}
\nc{\calM}{\mcal{M}}\nc{\calN}{\mcal{N}}\nc{\calO}{\mcal{O}}\nc{\calP}{\mcal{P}}
\nc{\calQ}{\mcal{Q}}\nc{\calR}{\mcal{R}}\nc{\calS}{\mcal{S}}\nc{\calT}{\mcal{T}}
\nc{\calU}{\mcal{U}}\nc{\calV}{\mcal{V}}\nc{\calW}{\mcal{W}}\nc{\calX}{\mcal{X}}
\nc{\calY}{\mcal{Y}}\nc{\calZ}{\mcal{Z}}
\nc{\fA}{\frak{A}}\nc{\fB}{\frak{B}}\nc{\fC}{\frak{C}} \nc{\fD}{\frak{D}}
\nc{\fE}{\frak{E}}\nc{\fF}{\frak{F}}\nc{\fG}{\frak{G}}\nc{\fH}{\frak{H}}
\nc{\fI}{\frak{I}}\nc{\fJ}{\frak{J}}\nc{\fK}{\frak{K}}\nc{\fL}{\frak{L}}
\nc{\fM}{\frak{M}}\nc{\fN}{\frak{N}}\nc{\fO}{\frak{O}}\nc{\fP}{\frak{P}}
\nc{\fQ}{\frak{Q}}\nc{\fR}{\frak{R}}\nc{\fS}{\frak{S}}\nc{\fT}{\frak{T}}
\nc{\fU}{\frak{U}}\nc{\fV}{\frak{V}}\nc{\fW}{\frak{W}}\nc{\fX}{\frak{X}}
\nc{\fY}{\frak{Y}}\nc{\fZ}{\frak{Z}}
\nc{\fa}{\frak{a}}\nc{\fb}{\frak{b}}\nc{\fc}{\frak{c}} \nc{\fd}{\frak{d}}
\nc{\fe}{\frak{e}}\nc{\fFf}{\frak{f}}\nc{\fg}{\frak{g}}\nc{\fh}{\frak{h}}
\nc{\fri}{\frak{i}}\nc{\fj}{\frak{j}}\nc{\fk}{\frak{k}}\nc{\fl}{\frak{l}}
\nc{\fm}{\frak{m}}\nc{\fn}{\frak{n}}\nc{\fo}{\frak{o}}\nc{\fp}{\frak{p}}
\nc{\fq}{\frak{q}}\nc{\fr}{\frak{r}}\nc{\fs}{\frak{s}}\nc{\ft}{\frak{t}}
\nc{\fu}{\frak{u}}\nc{\fv}{\frak{v}}\nc{\fw}{\frak{w}}\nc{\fx}{\frak{x}}
\nc{\fy}{\frak{y}}\nc{\fz}{\frak{z}}
\newtheorem{theorem}{Theorem}[section]
\newtheorem{prop}[theorem]{Proposition}
\theoremstyle{definition}
\newtheorem{example}[theorem]{Example}
\newtheorem{remark}[theorem]{Remark}
\newtheorem{thm}{Theorem}
\DeclareMathOperator{\Frame}{fr}
 \DeclareMathOperator{\GL}{GL}
  \DeclareMathOperator{\Th}{Th}
    \DeclareMathOperator{\aff}{aff}
  \DeclareMathOperator{\cRep}{\mathfrak{R}ep}
\DeclareMathOperator{\Hilb}{{Hilb}}
   \DeclareMathOperator{\sph}{sph}
\DeclareMathOperator{\SH}{SH}
\DeclareMathOperator{\Ell}{\mathcal{E}\textit{ll}}
\DeclareMathOperator{\Gr}{Gr}
\DeclareMathOperator{\CH}{CH}
\DeclareMathOperator{\MO}{MO}
\DeclareMathOperator{\inc}{in}
\DeclareMathOperator{\out}{out}
\DeclareMathOperator{\rat}{rat}
\newcommand{\g}{\mathfrak{g}}
\DeclareMathOperator{\Rep}{Rep}
\DeclareMathOperator{\Sh}{Sh}
\newcommand{\inj}{\hookrightarrow}
\newcommand{\pt}{\text{pt}}
\newcommand{\Z}{\bbZ}
\newcommand{\C}{\bbC}
\newcommand{\N}{\bbN}
\DeclareMathOperator{\fac}{fac}
\newcommand{\loc}{loc}
\DeclareMathOperator{\BM}{BM}
 \gdef\Young(#1){\hbox{$\vcenter
 {\mathcode`,="8000\mathcode`|="8000
  \def,{\global\advance\cols by 1 &}%
  \def|{\cr
        \multispan{\the\cols}\hrulefill\cr
        &\global\cols=2 }%
  \offinterlineskip\everycr{}\tabskip=0pt
  \dimen0=\ht\strutbox \advance\dimen0 by \dp\strutbox
  \halign
   {\vrule height \ht\strutbox depth \dp\strutbox##
    &&\hbox to \dimen0{\hss$##$\hss}\vrule\cr
    \noalign{\hrule}&\global\cols=2 #1\crcr
    \multispan{\the\cols}\hrulefill\cr%
   }
 }$}}
\begin{document}
\title{How to sheafify an elliptic quantum group}

\author[Y.~Yang]{Yaping~Yang}
\address{School of Mathematics and Statistics, The University of Melbourne, 813 Swanston Street, Parkville VIC 3010, Australia}
\email{yaping.yang1@unimelb.edu.au}

\author[G.~Zhao]{Gufang~Zhao}
\address{Institute of Science and Technology Austria,
Am Campus, 1,
Klosterneuburg 3400,
Austria}
\email{gufang.zhao@ist.ac.at}

\begin{abstract}
These lecture notes are based on Yang's talk at the MATRIX program \textit{Geometric R-Matrices: from Geometry to Probability}, at the University of Melbourne, Dec.18-22, 2017, and Zhao's talk at Perimeter Institute for Theoretical Physics in January 2018. 
We give an introductory survey of the results in  \cite{YZell}. We discuss a sheafified elliptic quantum group associated to any symmetric Kac-Moody Lie algebra. 
The sheafification is obtained by applying the equivariant elliptic cohomological theory to the moduli space of representations of a preprojective algebra. 
By construction, the elliptic quantum group naturally acts on the equivariant elliptic cohomology of Nakajima quiver varieties. 
As an application, we obtain a relation between the sheafified elliptic quantum group and the global affine Grassmannian over an elliptic curve. 
\end{abstract}
\maketitle

\section{Motivation}
The parallelism of the following three different kinds of mathematical objects was observed in \cite{GKV} by Ginzburg-Kapranov-Vasserot.

\tikzstyle{elli}=[draw, ellipse, minimum height=3em, text width=11em, text centered, node distance=10em]
\[
\begin{tikzpicture}
\node [elli] (0, 0) {(iii): Oriented Cohomology Theory: $\text{CH}, K, \Ell$};
\node [elli, above left ,  xshift=-4.5em, yshift=4em] (-1, 1) 
{(i): 1-dimensional algebraic groups $\mathbb{C}, \mathbb{C}^*, E$};
\node [elli, above right,  xshift=4.5em, yshift=4em ] (1, 1) 
{(ii): Quantum Groups: $Y_{\hbar}(\g), U_q(L\g), E_{\hbar, \tau}(\g)$};
 \draw[thick][->] (-1.1, 0.8) -- (-2, 1.5);
  \draw[thick][->] (-2.1, 1.4) -- (-1.2, 0.7);
 \draw[dashed, thick][->] (1.1, 0.8) -- (2, 1.5);
  \draw[thick][->](2.1, 1.4) -- (1.2, 0.7);
   \draw[thick][->] (-0.8, 2.1) -- (0.8, 2.1);
  \draw[thick][->] (0.8, 1.9) --(-0.8, 1.9);
\end{tikzpicture}
\]
Here, the correspondence (i)$\leftrightarrow$(iii) is well known in algebraic topology, goes back to the work of Quillen, Hirzebruch, et al. Similar correspondence also exists in algebraic geometry thanks to the oriented cohomology theories (OCT) of Levine and Morel \cite{LM}.
The algebraic OCT associated to $\mathbb{C}, \mathbb{C}^*$ and $E$ are, respectively, the intersection theory  (Chow groups) $\text{CH}$, the algebraic K-theory $K$ and the algebraic elliptic cohomology $\Ell$. 

The correspondence (i)$\leftrightarrow$(ii) was introduced to the mathematical community by Drinfeld \cite{Dr}. Roughly speaking, the quantum group in (ii) quantizes of the Lie algebra of maps from the algebraic group in (i) to a Lie algebra $\g$. The quantization is obtained from  
the solutions to the quantum Yang-Baxter equation
\begin{equation}
R_{12}(u)R_{13}(u+v) R_{23}(v)=R_{23}(v) R_{13}(u+v) R_{12}(u). 
\tag{QYBE}
\end{equation}
The Yangian $Y_{\hbar}(\g)$, quantum loop algebra $U_q(L\g)$, and elliptic quantum group $E_{\hbar, \tau}(\g)$ are respectively obtained from the rational, trigonometric and elliptic solutions of the QYBE. 
There is a dynamical elliptic quantum group associated to a general symmetrizable Kac-Moody Lie algebra, which is obtained from solutions to the dynamical Yang-Baxter equation.

The correspondence (ii)$\leftrightarrow$(iii) is the main subject of this note, and is originated from  the work of Nakajima \cite{Nak99}, Varagnolo  \cite{Va}, Maulik-Okounkov \cite{MO}, and many others. Without going to the details, the quantum group in (ii) acts on the corresponding  oriented cohomology of the Nakajima quiver varieties recalled below. 

Let $Q=(I, H)$ be a quiver, with $I$ being the set of vertices, and $H$ being the set of arrows. 
Let $Q^{\heartsuit}$ be the framed quiver, schematically illustrated below. For any dimension vector $(\vec{v}, \vec{w})$ of $Q^{\heartsuit}$, we have the Nakajima quiver variety $\mathfrak{M}(\vec{v}, \vec{w})$.
$$
\begin{matrix}
\xymatrix{
\vec{v}: \bigcirc
\ar@/^/[r]
\ar@/_/[r]
&
\bigcirc
\ar[r]
 &
\bigcirc
\ar@(ul,ur)
 }
 \\  \\
 \text{Quiver $Q$}
 \end{matrix}
 \,\ \,\ \,\ \,\  \,\ \,\ 
 \begin{matrix}
\xymatrix{
*+[l]{\vec{v}: \bigcirc}
\ar@/^/[r]
\ar@/_/[r]
\ar[d]&
\bigcirc
\ar[r]
 \ar[d]&
\bigcirc
\ar@(ul,ur)
\ar[d] \\
*+[l]{\vec{w}: \square}  &
\square &
\square 
}
\\ \\
 \text{Framed quiver $
Q^{\heartsuit}$}
\end{matrix}
 $$
Denote $\mathfrak{M}(\vec{w})=\coprod_{\vec{v}\in \mathbb{N}^I}\mathfrak{M}(\vec{v}, \vec{w})$. We follow the terminologies and notations in \cite{Nak99} on quiver varieties, hence refer the readers to {\it loc. cit.} for the details. Nevertheless, we illustrate by the following examples, and fix some conventions in \S~\ref{sec:Construction}.

\begin{example}
\leavevmode
\begin{enumerate}
\item Let $Q$ be the quiver with one vertex, no arrows. Let $(\vec{v}, \vec{w})=(r, n)$, with $0\leq r\leq n$. Then,  
$\mathfrak{M}(r, n)\cong T^*\Gr(r, n)$, the cotangent bundle of the Grassmannian. 
\item Let $Q$ be the Jordan quiver with one vertex, and one self-loop. Let $(\vec{v}, \vec{w})=(n, 1)$, with $n\in \mathbb{N}$. Then, 
$\mathfrak{M}(n, 1)\cong \Hilb^n(\mathbb{C}^2)$, the Hilbert scheme of $n$-points on $\mathbb{C}^2$.
\end{enumerate}

\[
\begin{matrix}
\xymatrix{
*+[l]{r: \bigcirc} \ar[d]
 \\
*+[l]{n: \square} }\\
\text{Example (1)}
\end{matrix}
\,\ \,\ \,\ \,\ 
\begin{matrix}
\xymatrix{
*+[l]{n: \bigcirc}
\ar@(ul,ur)\ar[d] 
 \\
*+[l]{1: \square} }
\\
\text{Example (2)}
\end{matrix}
\]
\end{example}
When $Q$ has no edge-loops, Nakajima in \cite{Nak99} constructed an action of $U_q(L\g)$ on the equivariant K-theory of $\mathfrak{M}(\vec{w})$, with $\fg$ being the symmetric Kac-Moody Lie algebra associated to $Q$. Varagnolo in  \cite{Va} constructed an action of $Y_{\hbar}(\g)$ on the equivariant cohomology of the Nakajima quiver varieties. For a general quiver $Q$, Maulik-Okounkov in \cite{MO} constructed a bigger Yangian $\mathbb{Y}_{\MO}$ acting on the equivariant cohomology of $\mathfrak{M}(\vec{w})$ using a geometric R-matrix formalism. See \cite{OS, AO} for the geometric R-matrix construction in the trigonometric (K-theoretic stable envelope), and elliptic case (elliptic stable envelope). 

The goal of the present notes is to explain a direct construction of the correspondence from (iii) to (ii) above,   using cohomological Hall algebra (CoHA) following  \cite{YZ1}. Most of the constructions are direct generalizations of Schiffmann and Vasserot \cite{SV2}. Closely related is the CoHA of Kontsevich-Soibelman \cite{KS}, defined to be the critical cohomology (cohomology valued in a vanishing cycle) of the moduli space of representations of a quiver with potential. A relation between the CoHA in the present note and the CoHA from \cite{KS} is explained in \cite{YZ16}.

This approach of studying quantum groups has the following advantages.
\begin{itemize}
\item The construction works for any oriented cohomology theory, beyond  $\text{CH}, K, \Ell$. 
One interesting example is the Morava K-theory. The  new quantum groups obtained via this construction are expected to be related the Lusztig's character formulas \cite[\S 6]{YZ1}. 
\item In the case of $\Ell$, the construction gives a sheafified elliptic quantum group, as well as an action of $E_{\hbar, \tau}(\g)$ on the equivariant elliptic cohomology of Nakajima quiver varieties, as will be explained in \S\ref{sec:3}. 
\end{itemize}

\section{Construction of the cohomological Hall algebras}
\label{sec:Construction}
For illustration purpose,  in this section we take the OCT to be the intersection theory $\CH$. Most  statements have counterparts in an arbitrary oriented cohomology theory. 
\subsection{The theorem}
Let $Q=(I, H)$ be an arbitrary quiver. Denote by $\g_Q$ the corresponding symmetric Kac-Moody Lie algebra associated to $Q$. 
The preprojective algebra, denoted by $\Pi_Q$, is the quotient of the path algebra $\C( Q\cup Q^{op})$ by the ideal generated by the relation $\sum_{x\in H}[x, x^{*}]=0$, 
where $x^*\in H^{\op}$ is the reversed arrow of $x\in H$. Fix a dimension vector $\vec{v}  \in \N^I$, let $\Rep(Q, \vec{v})$ be the affine space parametrizing representations of the path algebra $\bbC Q$ of dimension $\vec{v}$, and let
$\Rep(\Pi_Q, \vec{v})$ be the affine algebraic variety parametrizing representations of $\Pi_Q$ of dimension $\vec{v}$, with an action of $\GL_{\vec{v}}:=\prod_{i\in I} \GL_{v^i}$. Here $\vec{v}=(v^i)_{i\in I}$.
Let $\cRep(\Pi_Q, \vec{v}):=\Rep(\Pi_Q, \vec{v})/\GL_{\vec{v}}$ be the quotient stack. 

\begin{example}
Let $Q$ be the Jordan quiver: $\xymatrix@C=0.5em{
{\bigcirc}
\ar@(ul,ur)-|{x}}$.  The preprojective algebra $\Pi_Q=\C[x, x^*]$ is the free polynomial ring in two variables. 
We have
\[
\cRep(\Pi_Q, n)=\{(A, B)\in (\mathfrak{gl}_n)^2 \mid [A, B]=0\} /\GL_{n}.
\]
\end{example}
Consider the graded vector space
\begin{equation}
\label{eq:calP}
\calP(\CH, Q):=\bigoplus_{\vec{v}\in \N^I} \CH_{\C^*}(\cRep(\Pi_Q, \vec{v}))=
\bigoplus_{\vec{v}\in \N^I} \CH_{\GL_{\vec{v}}\times \C^*}(\Rep(\Pi_Q, \vec{v})). 
\end{equation}
The torus $\C^*$ acts on $\Rep(\Pi_Q, \vec{v})$ the same way as in \cite[(2.7.1) and (2.7.2)]{Nak99}. More explicitly,  
let $a$ be the number of arrows in $Q$ from vertex $i$ to $j$; We enumerate these arrows as  $h_1, \dots, h_a$. The corresponding  reversed arrows in $Q^{\op}$ are enumerated as $h_1^*, \cdots, h_a^*$. We define an action of $\C^*$ on $\Rep(\Pi_Q, \vec{v})$ in the following way.
For $t\in \C^*$ and $(B_p, B_p^*) \in \Rep(\Pi_Q, \vec{v})$ with $h_p \in H$, we define 
\[
 t\cdot B_p:=t^{a+2-2p} B_p, \,\  t\cdot B_p^*:=t^{-a+2p} B_p^*
 \] 

\begin{thm}\cite[Yang-Zhao]{YZ1, YZ2} 
\label{thm:YZ1}
\begin{enumerate}
\item 
The vector space $\calP(\CH)$ is naturally endowed with a product $\star$ and a coproduct $\Delta$, making it a bialgebra. 
\item 
On $D(\calP):=\calP\otimes \calP$, there is a bialgebra structure obtained as a  Drinfeld double of $\calP(\CH)$. 
For any $\vec{w}\in \mathbb{N}^I$, the algebra $D(\calP)$ acts on $\CH_{\GL_{\vec{w}}}(\mathfrak{M}(\vec{w}))$. 
\item 
Assume $Q$ has no edge loops. There is a certain  {\it spherical subalgebra}  $D(\calP^{\sph})\subseteq D(\calP(\CH))$, such that
\[
D(\calP^{\sph})\cong Y_\hbar(\g_{Q}). 
\]
Furthermore, the induced Yangian action from (2) is compatible with the action constructed by Varagnolo in \cite{Va}, 
and, in the case when $Q$ is of $ADE$ type,  the action of Maulik-Okounkov in \cite{MO}. 
\end{enumerate}
\end{thm}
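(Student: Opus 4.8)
The plan is to build the two algebraic structures of part (1) from geometric correspondences of stacks of $\Pi_Q$-representations and then verify the bialgebra axioms on an explicit shuffle model. For the product $\star$, I would use, for $\vec{v}_1, \vec{v}_2 \in \N^I$, the stack $\cRep(\Pi_Q, \vec{v}_1, \vec{v}_2)$ of short exact sequences $0 \to N \to M \to M/N \to 0$ of $\Pi_Q$-modules with $\dim N = \vec{v}_1$ and $\dim M/N = \vec{v}_2$, together with the maps $p$ recording $(N, M/N)$ and $q$ recording $M$, and set $\star = q_* \circ p^*$. Since $\Rep(\Pi_Q, \vec{v})$ is singular — it is cut out by $\sum_{x \in H}[x, x^*] = 0$ inside the smooth space $\Rep(\bar Q, \vec{v})$, where $\bar Q := Q \cup Q^{\op}$ — the map $p$ is not smooth and $p^*$ must be the refined Gysin/virtual pullback attached to the natural perfect obstruction theory, with degree shifts dictated by the $\C^*$-weights fixed before the theorem. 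Associativity then follows from the usual three-step-flag diagram. For the coproduct $\Delta$ I would use the opposite correspondence, i.e.\ hyperbolic localization along a generic cocharacter of $\GL_{\vec{v}}$ splitting $\vec{v} = \vec{v}_1 + \vec{v}_2$, which after inverting the relevant Euler classes gives $\Delta : \calP \to \calP \,\widehat{\otimes}\, \calP$. Both operations become transparent on the shuffle model: localizing to the maximal torus identifies each graded piece with symmetric rational functions, $\star$ becomes a shuffle product whose kernel is determined by the Euler form of $Q$ and the $\C^*$-weights, $\Delta$ becomes deconcatenation, and the bialgebra compatibility reduces to a bookkeeping identity for the shuffle kernel.

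For part (2), I would first produce a nondegenerate Hopf pairing between $\calP$ and a co-opposite copy $\calP^{\coop}$, coming from the localization (Euler-class) pairing on the equivariant cohomology of each $\cRep(\Pi_Q, \vec{v})$. The Drinfeld double $D(\calP) = \calP \otimes \calP$ is then the formal construction attached to this pairing, and its bialgebra axioms reduce to the statement that the pairing is a bialgebra pairing, which I would again check on the shuffle model. For the action on $\CH_{\GL_{\vec{w}}}(\mathfrak{M}(\vec{w}))$, I would use the Hecke correspondences relating $\mathfrak{M}(\vec{v}, \vec{w})$ and $\mathfrak{M}(\vec{v} + \bfe_i, \vec{w})$: pull-push along these defines the raising and lowering operators, the Cartan part acts by Chern classes of the tautological bundles, and one verifies the defining relations of $D(\calP)$. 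The crucial point making this a genuine $D(\calP)$-module, rather than an unrelated collection of operators, is the compatibility between the Hecke correspondence on quiver varieties and the Hall correspondence on $\cRep(\Pi_Q)$, which I would establish by a direct comparison of the two flag stacks.

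For part (3), I would define $\calP^{\sph}$ as the subalgebra generated by the degree-$\bfe_i$ pieces $\CH_{\C^* \times \C^*}(\cRep(\Pi_Q, \bfe_i))$ and $D(\calP^{\sph})$ as the corresponding double. The isomorphism with $Y_\hbar(\g_Q)$ I would establish through the Drinfeld current presentation: send the Yangian generators $x_{i,r}^{\pm}, \xi_{i,r}$ to explicit elements of $D(\calP^{\sph})$ built from the degree-$\bfe_i$ generators multiplied by powers of the equivariant parameter, and verify the current relations on the shuffle model, where they become functional identities satisfied by the shuffle kernel. The hypothesis that $Q$ has no edge loops enters precisely here: it guarantees that the diagonal Cartan data give genuine simple roots and that the Serre relations hold, so that $\calP^{\sph}$ is generated in degree one and the map is an isomorphism and not merely a surjection onto a larger algebra. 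Finally, compatibility with the existing actions follows by matching on generators: Varagnolo's construction uses the same Hecke correspondences, so the two actions agree on the degree-one generators and hence everywhere, and in $ADE$ type one identifies the $R$-matrix/stable-envelope generators of $\mathbb{Y}_{\MO}$ with the spherical generators acting on $\CH_{\GL_{\vec{w}}}(\mathfrak{M}(\vec{w}))$.

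The hardest step will be part (3) — verifying that the shuffle kernel produced by the geometry reproduces exactly the structure functions of the Yangian current relations together with the Serre relations. This requires the precise computation of the Euler classes appearing in the Hall product in degrees $\bfe_i + \bfe_j$ and $2\bfe_i + \bfe_j$, and it is where the no-edge-loop hypothesis is indispensable; by contrast, the bialgebra and Drinfeld-double formalism of parts (1) and (2), while technically delicate for the singular space $\Rep(\Pi_Q, \vec{v})$, is essentially structural once the shuffle presentation is in hand.
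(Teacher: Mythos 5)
Your proposal follows essentially the same route as the paper, which itself only sketches the constructions and defers the details to \cite{YZ1, YZ2}: the Hall product via the $\mathfrak{E}\text{xt}$-correspondence with obstruction-theoretic pullbacks, the shuffle-algebra model of Proposition~\ref{prop:shuffle}, the Drinfeld double built from a bialgebra pairing with an added Cartan, the action through the framed correspondence \eqref{eq:corr} restricted to the semistable locus, and the identification with $Y_\hbar(\g_Q)$ through Drinfeld currents on the spherical subalgebra. The only small discrepancy is that the paper takes $\phi$ (your $p$) to be smooth and handles the singularity of $\Rep(\Pi_Q, \vec{v})$ by equipping the stacks themselves with obstruction theories coming from the embeddings into $T^*\Rep(Q, \vec{v})/\GL_{\vec{v}}$, rather than by a refined Gysin pullback along a non-smooth map.
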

\begin{remark}
The construction of the Drinfeld double involves adding a Cartan subalgebra and defining a bialgebra pairing, as can be found in detail in \cite[\S3]{YZ2}.  The Cartan subalgebra can alternatively be replaced by a symmetric monoidal structure as in \cite[\S5]{YZell}.
The definition of the spherical subalgebra can be found in \cite[\S3.2]{YZ2}.
\end{remark}

\subsection{Constructions}
The Hall multiplication $\star$ of $\calP(\CH)$ is defined using the following correspondence. 
\[
\xymatrix@R=0.2em{
&\mathfrak{E}\text{xt}\ar[ld]_(0.3){\phi}\ar[rd]^(0.3){\psi}&\\
\cRep(\Pi_Q, \vec{v_1})\times \cRep(\Pi_Q, \vec{v_2})&&\cRep(\Pi_Q, \vec{v_1}+\vec{v_2})
}
\]
where $\mathfrak{E}\text{xt}$ is the moduli space of extensions $\{0\to V_1 \to V\to V_2\to 0 \mid \dim(V_i)=\vec{v_i}, i=1, 2\}$. 
The map $\phi: (0\to V_1 \to V\to V_2\to 0)\mapsto (V_1, V_2)$ is smooth, and $\psi: (0\to V_1 \to V\to V_2\to 0)\mapsto V$ is proper. The Hall multiplication $\star$ is defined to be
\[
\star=\psi_* \circ \phi^*. 
\]
Here the stacks $\cRep(\Pi_Q, \vec{v})$ for $\vec{v}\in\bbN^I$ are endowed with obstruction theories obtained from the embeddings $\cRep(\Pi_Q, \vec{v})\inj T^*Rep(Q, \vec{v})/\GL_{\vec{v}}$, and $\mathfrak{E}\text{xt}$ has a similar obstruction theory described in detail in \cite[\S 4.1]{YZ1}. Similar for the construction of the action below.

Now we explain the action in Theorem \ref{thm:YZ1} (2).
Let $\cRep^{\Frame}(\Pi_Q, \vec{v}, \vec{w})$ be the moduli space of framed representations of $\Pi_Q$ with dimension vector $(\vec{v}, \vec{w})$, which is constructed as a quotient stack $\Rep^{\Frame}(\Pi_Q, \vec{v}, \vec{w})/\GL_{\vec{v}}$. 
Imposing a suitable semistability condition, explained in detail in \cite{Nak99}, we get an open subset $\cRep^{\Frame, ss}(\Pi_Q, \vec{v}, \vec{w})\subset \cRep^{\Frame}(\Pi_Q, \vec{v}, \vec{w})$. There is an isomorphism 
\[
\CH(\cRep^{\Frame, ss}(\Pi_Q, \vec{v}, \vec{w}))=\CH_{\GL_{\vec{w}}}(\mathfrak{M}(\vec{v}, \vec{w})).
\] 

We have the following general correspondence  \cite[\S 4.1]{YZ1}:
\begin{equation}\label{eq:corr}
\xymatrix@R=0.2em@C=0.2em{
&\mathfrak{E}\text{xt}^{\Frame} \ar[ld]_(0.3){\overline{\phi}}\ar[rd]^(0.3){\overline{\psi}}&\\
\cRep^{\Frame}(\Pi_Q, \vec{v_1}, \vec{w_1})\times \cRep^{\Frame}(\Pi_Q, \vec{v_2}, \vec{w_2})&&\cRep^{\Frame}(\Pi_Q, \vec{v_1}+\vec{v_2}, \vec{w_1}+\vec{w_2})
}
\end{equation}
The action in Theorem \ref{thm:YZ1} (2) is defined as $(\overline{\psi}^{ss})_* \circ (\overline{\phi}^{ss})^*$ by taking $\vec{w_1}=\vec{0}$, and imposing a suitable semistability  condition on the correspondence \eqref{eq:corr}. 

\subsection{Shuffle algebra}
Notations as before, let $Q=(I, H)$ be a quiver. Following the proof of \cite[Proposition 3.4]{YZ1}, we have the following shuffle description of $(\overline{\psi})_* \circ (\overline{\phi})^*$ in \eqref{eq:corr}.

Let  $\SH$ be an $\bbN^I\times \bbN^I$-graded $\mathbb{C}[t_1,t_2]$-algebra. As a $\mathbb{C}[t_1,t_2]$-module, we have 
\[\SH=\bigoplus_{\vec{v}\in\bbN^I, \vec{w}\in\bbN^I}\SH_{\vec{v}, \vec{w}}, \,\ \text{where}\,\ \SH_{\vec{v}, \vec{w}}:=\mathbb{C}[t_1,t_2]\otimes \mathbb{C}
[ \lambda^i_s ]_{i\in I, s=1,\dots, v^i}^{\fS_{\vec{v}}}\otimes \mathbb{C}
[ z^j_t ]_{j\in I, t=1,\dots, w^j}^{\fS_{\vec{w}}} ,\]
here $\fS_{\vec{v}}=\prod_{i\in I} \fS_{v^i}$ is the product of symmetric groups, and $\fS_{\vec{v}}$ naturally acts on the variables $\{ \lambda^i_s \}_{i\in I, s=1,\dots, v^i}$ by permutation. 
For any $(\vec{v_1}, \vec{w_1})$ and $(\vec{v_2}, \vec{w_2})\in \bbN^I\times \bbN^I$, 
we consider $\SH_{\vec{v_1}, \vec{w_1}}\otimes_{\mathbb{C}[t_1,t_2]} \SH_{\vec{v_2}, \vec{w_2}}$ as a subalgebra of $
\mathbb{C}[t_1,t_2][\lambda^i_s, z^j_t]_{\big\{\begin{smallmatrix}i\in I, s=1,\dots, ({v_1^i}+{v_2^i}), \\ j\in I, t=1,\dots, ({w_1^j}+{w_2^j}) \end{smallmatrix}\big\}}$ 
by sending $(\lambda'^i_s, z'^j_t) \in \SH_{\vec{v_1}, \vec{w_1}} $ to $(\lambda^i_s, z^j_t)$, and 
$(\lambda''^i_s, z''^j_t) \in \SH_{\vec{v_2}, \vec{w_2}} $ to $(\lambda^i_{s+v_1^i}, z''^j_{t+w_1^j})$.

We define the shuffle product $\SH_{\vec{v_1}, \vec{w_1}}\otimes_{\mathbb{C}[t_1,t_2]} \SH_{\vec{v_2}, \vec{w_2}}\to \SH_{\vec{v_1}+\vec{v_2}, \vec{w_1}+\vec{w_2}}$,
\begin{align}
&f(\lambda_{\vec{v_1}} , z_{\vec{w_1}})\otimes g(\lambda_{\vec{v_2}} , z_{\vec{w_2}})\mapsto
\sum_{\sigma\in \Sh(\vec{v_1}, \vec{v_2})\times \Sh(\vec{w_1}, \vec{w_2})} \sigma\Big(f(\lambda'_{\vec{v_1}} , z'_{\vec{w_1}})\cdot g(\lambda''_{\vec{v_2}} , z''_{\vec{w_2}})\cdot \fac_{\vec{v_1}+\vec{v_2},\vec{w_1}+\vec{w_2}}\Big), \label{eq:shuffle}
\end{align}
with $\fac_{\vec{v_1}+\vec{v_2},\vec{w_1}+\vec{w_2}}$ specified as follows. 

Let
\begin{equation}\label{equ:fac1}
\fac_1:=\prod_{i\in I}\prod_{s=1}^{v_1^i}
\prod_{t=1}^{v_2^i}\frac{\lambda\rq{}^i_s-\lambda\rq{}\rq{}^i_t+t_1+t_2}{\lambda\rq{}\rq{}^i_t-\lambda\rq{}^i_s}. 
\end{equation}
Let $m:H\coprod H^{\op}\to \bbZ$ be a function, which for each $h\in H$ provides two integers $m_h$ and $m_{h^*}$. We define the torus  $T=(\mathbb{C}^*)^2$ action on $\Rep(Q\cup Q^{\op})$ according to the function $m$, satisfying some technical conditions spelled out in \cite[Assumption~1.1]{YZ2}. The $T$-equivariant variables are denoted by $t_1,t_2$.
Define
\begin{align}
\label{equ:fac2}
\fac_2:=&\prod_{h\in H}\Big(
\prod_{s=1}^{v_1^{\out(h)}}
\prod_{t=1}^{v_2^{\inc(h)}}
(\lambda_t^{'' \inc(h)}-\lambda_s^{'\out(h)}+ m_h t_1)
\prod_{s=1}^{v_1^{\inc(h)}}
\prod_{t=1}^{v_2^{\out(h)}}
(\lambda_t^{''\out(h)}-\lambda_s^{'\inc(h)}+m_{h^*} t_2)
\Big) \notag\\
&\cdot \prod_{i\in I} 
\Big(
\prod_{s=1}^{v_1^{i}}
\prod_{t=1}^{w_2^{i}}
(z_t^{''i}-\lambda_s^{'i}+ t_1)
\prod_{s=1}^{w_1^{i}}
\prod_{t=1}^{v_2^{i}}
(\lambda_t^{'' i}-z_s^{'i}+  t_2)
\Big)
\end{align}
Let
\begin{equation}\label{eq:fac formula}
\fac_{\vec{v_1}+\vec{v_2},\vec{w_1}+\vec{w_2}}:=\fac_1 \cdot \fac_2. 
\end{equation}
 \begin{prop}
 \label{prop:shuffle}
Under the identification 
\[
\CH_{\GL_{\vec{v}}\times \GL_{\vec{w}}\times T}(\Rep^{\Frame}(\Pi_Q, \vec{v}, \vec{w}))\cong
\mathbb{C}[t_1,t_2]\otimes \mathbb{C}[ \lambda^i_s]_{i\in I, s=1,\dots, v^i}^{\fS_{\vec{v}}} \otimes \mathbb{C}[z^{j}_{t}]_{j\in I, s=1,\dots, w^i}^{\fS_{\vec{w}}}=:
\SH_{ \vec{v}, \vec{w}},
\] 
the map $(\overline{\psi})_* \circ (\overline{\phi})^*$ is equal to the multiplication \eqref{eq:shuffle} of the shuffle algebra $\SH=\bigoplus_{\vec{v}\in\bbN^I, \vec{w}\in\bbN^I}\SH_{\vec{v}, \vec{w}}$.
\end{prop}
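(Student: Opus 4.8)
The plan is to follow the template of \cite[Proposition 3.4]{YZ1}, reducing the geometric operation $(\overline{\psi})_*\circ(\overline{\phi})^*$ to an explicit symmetric-function computation. Under the identification in the statement the variables acquire a concrete meaning: at each vertex $i$ the $\lambda^i_s$ are the Chern roots of the tautological $\GL_{\vec v}$-bundle, the $z^j_t$ are the Chern roots of the framing $\GL_{\vec w}$-bundle, and $t_1,t_2$ are the equivariant parameters of $T$, while invariance under $\fS_{\vec v}\times\fS_{\vec w}$ recovers $\SH_{\vec v,\vec w}$. Because all of the relevant classes are pulled back from the classifying space, the splitting principle lets me express every flat pullback and Gysin pushforward purely in terms of these roots.

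Next I would realize the correspondence \eqref{eq:corr} through the stack of flag-preserving representations. Writing $\mathfrak{E}\text{xt}^{\Frame}=[R'/P]$, where $R'$ parametrizes representations of the doubled quiver together with a framing that preserve the chosen subrepresentation $V_1\subset V$ and $P\subset\GL_{\vec v_1+\vec v_2}$ is the associated parabolic, the map $\overline{\phi}$ is the smooth projection induced by the Levi quotient $P\twoheadrightarrow\GL_{\vec v_1}\times\GL_{\vec v_2}$, forgetting the extension data. Hence $\overline{\phi}^{*}$ is flat pullback and, on polynomials, is the tautological inclusion $f\otimes g\mapsto f(\lambda',z')\,g(\lambda'',z'')$. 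I would then split the proper map $\overline{\psi}$ as a closed immersion $[R'/P]\hookrightarrow[R/P]$, the inclusion of flag-preserving representations into all representations, followed by the flag-bundle projection $[R/P]\to[R/\GL_{\vec v_1+\vec v_2}]$ whose fibre is the partial flag variety $\GL/P$. The closed immersion contributes multiplication by the equivariant Euler class of its normal bundle, namely the directions in each $B_h$, $B_{h^{*}}$ and each framing map that fail to preserve the flag; I would match these term by term with $\fac_2$, reading off the $T$-weights $m_h t_1$ and $m_{h^{*}} t_2$ on the arrow factors and $t_1$, $t_2$ on the framing factors. The flag-bundle projection contributes the parabolic Gysin map $\CH_{P}\to\CH_{\GL}$, which is the symmetrization over minimal coset representatives dividing by the Weyl denominator $\prod_{i,s,t}(\lambda^{\prime\prime i}_t-\lambda^{\prime i}_s)$; this simultaneously produces the shuffle sum over $\Sh(\vec v_1,\vec v_2)\times\Sh(\vec w_1,\vec w_2)$ and the denominator of $\fac_1$.

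The remaining ingredient is the obstruction theory of $\Pi_Q$: the embedding $\cRep(\Pi_Q,\vec v)\hookrightarrow T^{*}\Rep(Q,\vec v)/\GL_{\vec v}$ of \cite[\S4.1]{YZ1} adds to the virtual normal bundle of the flag locus, at each vertex $i$, the moment-map relation $\sum_{x}[x,x^{*}]$, which has $T$-weight $t_1+t_2$ and is of Hom-type $\Hom(V_2^i,V_1^i)$; its equivariant Euler class is exactly the numerator $\prod_{i,s,t}(\lambda^{\prime i}_s-\lambda^{\prime\prime i}_t+t_1+t_2)$ of $\fac_1$. Collecting the three contributions inside the single symmetrization operator gives $\fac_{\vec v_1+\vec v_2,\vec w_1+\vec w_2}=\fac_1\cdot\fac_2$, which is precisely \eqref{eq:shuffle}. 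I expect the principal difficulty to be the virtual bookkeeping: one must verify that the obstruction theories on the two outer terms and on $\mathfrak{E}\text{xt}^{\Frame}$ split compatibly along this factorization, so that the virtual pullback $\overline{\phi}^{*}$ and the virtual proper pushforward $\overline{\psi}_{*}$ genuinely compose into the stated symmetrization with factor $\fac_1\cdot\fac_2$; in particular one must check that the cotangent directions $B_{h^{*}}$ and the preprojective relation are weighted by the correct powers of $t_1,t_2$, carry the correct signs, and are each counted exactly once.
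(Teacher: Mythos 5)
Your proposal is correct and follows essentially the same route as the paper, whose proof simply invokes \cite[Proposition 3.4]{YZ1} applied to the framed quiver $Q^{\heartsuit}$: the factorization of $\overline{\psi}$ into a closed immersion (contributing the Euler class $\fac_2$ and, via the obstruction theory of the moment-map relation, the numerator of $\fac_1$) followed by a partial flag bundle (contributing the shuffle symmetrization and the Weyl denominator) is exactly the argument being cited. The one point to treat with slightly more care than "replace $Q$ by $Q^{\heartsuit}$" suggests is that the framing nodes are not quotiented by $\GL_{\vec{w}}$ and carry no moment-map relation, which is why the sum over $\Sh(\vec{w_1},\vec{w_2})$ appears without a corresponding Weyl denominator or $(t_1+t_2)$-numerator in the $z$-variables.
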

\begin{proof}
The proof follows from the same proof as \cite[Proposition 3.4]{YZ1} replacing the quiver $Q$ by the framed quiver $Q^{\heartsuit}$. 
\end{proof}
\begin{remark}
\begin{enumerate}
\item 
For an arbitrary cohomology theory, Proposition \ref{prop:shuffle} is still true when $A+B$ is replaced by $A+_{F}B$ in the formula \eqref{eq:fac formula} of $\fac_{\vec{v_1}+\vec{v_2},\vec{w_1}+\vec{w_2}}$, where $F$ is the  formal group law associated to this cohomology theory. 
\item Restricting to the open subset $\cRep^{\Frame,ss}(\Pi_Q, \vec{v})$ of $\cRep^{\Frame}(\Pi_Q, \vec{v})$ induces a surjective map $\SH_{\vec{v}, \vec{w}}\to \CH_{\GL_{\vec{w}}}(\mathfrak{M}(\vec{v}, \vec{w}))$, for $\vec{v}\in \N^I, \vec{w}\in \N^I$. The surjectivity follows from \cite{MN}. This map is compatible with the shuffle product of the left hand side, and the multiplication on the right hand side induced from \eqref{eq:corr}.
\end{enumerate}
\end{remark}

\subsection{Drinfeld currents}
Let $\vec{v}=e_i$ be the dimension vector valued 1 at vertex $i\in I$ and zero otherwise. Then, $\calP_{e_i}=\CH_{\GL_{e_i}\times \C^*}(\Rep(\Pi_Q, e_i))=\C[\hbar][x_i]$. 
Let $(x_i)^k \in \calP_{e_i}$, $k\in \N$, be the natural monomial basis of  $\calP_{e_i}$. One can form the Drinfeld currents
\[
\mathfrak{X}^+_i(u):=\sum_{k\in N} (x_i)^k u^{-k-1}, \,\ i\in I. 
\]
By Theorem \ref{thm:YZ1}, the generating series $\{\mathfrak{X}^+_i(u)\mid i\in I\}$ satisfy the relations of $Y_{\hbar}^+(\fg)[[u]]$ \cite[\S~7]{YZ1}.

\section{Sheafified elliptic quantum groups}
\label{sec:3}
In this section, we applying the equivariant elliptic cohomology to the construction in \S\ref{sec:Construction}. It gives a sheafified elliptic quantum group, as well as its action on $\mathfrak{M}(\vec{w})$. 

\subsection{Equivariant elliptic cohomology}
\label{sec:ell cohomology}
There is a sheaf-theoretic definition of the equivariant elliptic cohomology theory $\mathcal{E}ll_{G}$ in \cite{GKV} by Ginzburg-Kapranov-Vasserot. It was investigated by many others later on, including Ando \cite{And00, And03}, Chen \cite{Chen10}, Gepner\cite{Gep}, Goerss-Hopkins \cite{GH}, Lurie \cite{Lurie}. 

Let $\mathcal X_G$ be the moduli scheme of semisimple semistable degree 0 $G$-bundles over an elliptic curve. 
For a $G$-variety $X$, the $G$-equivariant elliptic cohomology $\mathcal{E}ll_{G}(X)$ of $X$ is a quasi-coherent sheaf of $\mathcal{O}_{\mathcal X_G}$-module, satisfying certain axioms.
In particular, $\mathcal{E}ll_{G}(\pt)\cong \mathcal{O}_{\mathcal X_G}$.
\begin{example} 
\begin{enumerate}
  \item
Let $G=S^1$, then $\mathcal{E}ll_{S^1}(X)$ is a coherent sheaf on $\text{Pic}(E)\cong E$.
This fact leads to the following patten. 
\begin{itemize}
\item
$\CH_{S^1}(X)$ is a module over $\CH_{S^1}(\pt)=\mathcal{O}_{\C}$.
\item
$K_{S^1}(X)$ is a module over $K_{S^1}(\pt)=\mathcal{O}_{\C^*}$.
\item
$\mathcal{E}ll_{S^1}(X)$ is a module over $\mathcal{E}ll_{S^1}(\pt)=\mathcal{O}_{E}$.
\end{itemize}
\item
Let $G=\GL_n$, then $\mathcal{E}ll^*_{\GL_n}(X)$ is a coherent sheaf over $E^{(n)}=E^{n}/\mathfrak{S}_n$.  
\end{enumerate}
\end{example}
There is a subtlety for pushforward in the equivariant elliptic cohomology theory. 
Let $f: X\to Y$ be a proper, $G$-equivariant homomorphism. 
The pushforward $f_*$ is the following map
\[
f_*: \mathcal{E}ll_{G}(\Th(f)) \to \mathcal{E}ll_{G}(Y), 
\]
where $\mathcal{E}ll_{G}(\Th(f))$, depending on the Thom bundle of the relative tangent bundle of $f$,  is a rank 1 locally free module over $\mathcal{E}ll_{G}(X)$. 
The appearance of this twist can be illustrated in the following simple example.  The general case is discussed in detail in \cite[\S~2]{GKV} and \cite[\S~2]{YZell}.
\begin{example}
Let $f: \{0\} \inj \C$ be the inclusion. The torus $S^1$ acts on $\C$ by scalar multiplication. Denote by $D$ the disc around $0$. We have the Thom space $\Th:=D/S^1$. 
There is an exact sequence
\[
0\to \mathcal{E}ll_{S^1}(\Th) \to \mathcal{E}ll_{S^1}(D) \to \mathcal{E}ll_{S^1}(S^1)\to 0. 
\]
As $\mathcal{E}ll_{S^1}(D)\cong \calO_E$, since $D$ is contractible, and $\mathcal{E}ll_{S^1}(S^1)$ is the skyscraper sheaf $\C_0$ at $0$, we have the isomorphism
$ \mathcal{E}ll_{S^1}(\Th)\cong \calO(-\{0\})$. 
\end{example}
\subsection{The sheafified elliptic quantum group}
Recall the elliptic cohomological Hall algebra (see \eqref{eq:calP}) is defined as
\[
\calP(\Ell, Q):=\bigoplus_{\vec{v}\in \N^I} \Ell_{\GL_{\vec{v}}\times \C^*}(\Rep(\Pi_Q, \vec{v})). 
\]
By the discussion in \S\ref{sec:ell cohomology}, $\calP(\Ell, Q)$ is a sheaf on 
\[
\calH_{E\times I} \times E_{\hbar}:=\coprod_{\{\vec{v}=(v^i)_{i\in I} \in \N^I\}} (E^{ (v^1)}\times E^{ (v^2)}\times \cdots \times E^{(v^n)})\times E_{\hbar}, 
\] 
where $\hbar$ comes from the $\C^*$-action, and $\calH_{E\times I}$ is the moduli space of $I$-colored points on $E$. 
\begin{center}
\includegraphics[scale = 0.14]{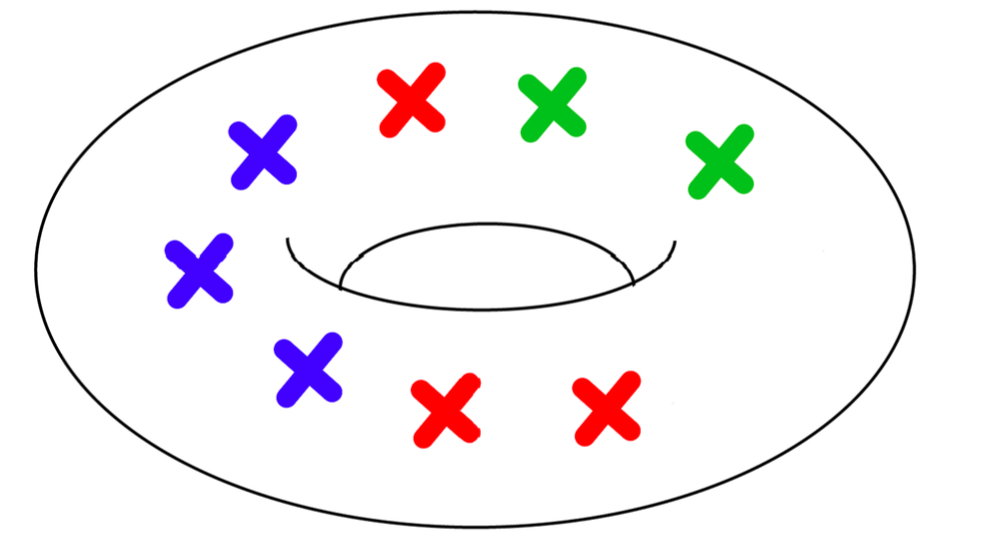}
\end{center}
Due to the subtlety of pushing-forward in the elliptic cohomology, there is no product on $\calP(\Ell, Q)$ in the usual sense. We illustrate the structure of the Hall multiplication $\star$ of $\calP(\Ell, Q)$ in the following example. 
\begin{example}
Let $Q$ be the quiver $\bigcirc$ with one vertex, no arrows. In this case, we have $\g_Q=\mathfrak{sl}_2$. The elliptic CoHA $\calP(\mathcal{E}ll, \mathfrak{sl}_2)$ associated to the Lie algebra $\mathfrak{sl}_2$ consists of:
\begin{itemize}
\item  A  coherent sheaf $\calP(\mathcal{E}ll, \mathfrak{sl}_2)=(\calP_n)_{n\in \N}$ on $\calH_E\times E_{\hbar}=\coprod_{n\in \N}E^{(n)}\times E_{\hbar}$. 
\item 
For any $n, m\in \mathbb{N}$, a morphism of sheaves on $E^{(n+m)}\times E_{\hbar}$: \[
\star: 
(\Sigma_{n, m})_{*}\big( (\calP_{n} \boxtimes 
\calP_{m})\otimes \mathcal L_{n, m}\big)
\to \calP_{n+m},\]
where $\Sigma_{n, m}$ the symmetrization map 
$
E^{(n)}\times E^{ (m)}\to E^{(n+m)}
$, and $\mathcal L_{n, m}$ is some fixed line bundle on $E^{(n)}\times E^{(m)}\times E_{\hbar}$, depending on the parameter $\hbar$. 
\item
The above morphisms are associative in the obvious sense.
\Omit{
 that the two different ways to obtain the map
\[
\pi_{v_1, v_2, v_3*}\Big( \mathcal{E}ll_{G_{v_1}}(\mu_{v_1}^{-1}(0)) \boxtimes 
\mathcal{E}ll_{G_{v_2}}(\mu_{v_2}^{-1}(0))
\boxtimes 
\mathcal{E}ll_{G_{v_3}}(\mu_{v_3}^{-1}(0))\Big)
\to
\mathcal{E}ll_{G_{v}}(\mu_{v}^{-1}(0))
\]
are the same, for $v=v_1+v_2+v_3$.}
\end{itemize}
\end{example}
Let $\mathcal{C}$ be the category of coherent sheaves on 
$\mathcal{H}_{E\times I}\times E_{\hbar}$. Motivated by the Hall multiplication $\star$ of $\calP(\Ell, Q)$, we define a tensor structure 
$\otimes_{\hbar}$ on $\mathcal{C}$: for $\{\mathcal{F}\}, \{\mathcal{G}\}\in \text{Obj}(\mathcal{C})$, $\mathcal{F}\otimes_{\hbar} \mathcal{G}$ is defined as
\begin{equation}\label{equ:tensor on C}
(\mathcal{F}\otimes_{{\hbar}} \mathcal{G})_{\vec{v}}:=\bigoplus_{\vec{v}_1+\vec{v}_2=\vec{v}}
(\Sigma_{\vec{v}_1, \vec{v}_2})_{*}\big( (\mathcal{F}_{\vec{v}_1} \boxtimes 
 \mathcal{G}_{\vec{v}_1})\otimes \mathcal L_{\vec{v}_1, \vec{v}_2}\big). 
\end{equation}
\begin{thm}\label{conj:ell}\cite[Yang-Zhao]{YZell}
\leavevmode
\begin{enumerate}
\item The category $(\calC, \otimes_{\hbar})$ is a symmetric monoidal category, with the braiding given by \cite[Theorem~3.3]{YZell}. 
\item 
The elliptic CoHA $(\calP(\mathcal{E}ll, Q), \star, \Delta)$, endowed with the Hall multiplication $\star$, and coproduct $\Delta$, is a bialgebra object in $(\calC^{\loc}, \otimes_{\hbar})$. 
\item The Drinfeld double $D(\calP(\mathcal{E}ll, Q))$ of $\calP(\mathcal{E}ll, Q)$ acts on $\mathcal{E}ll_{G_{\vec{w}}}(\mathfrak{M}(\vec{w}))$, for any $\vec{w}\in \N^I$. 
\item After taking a certain space of meromorphic sections $\Gamma_{\rat}$, the bialgebra $\Gamma_{\rat}(D(\calP^{\sph}_\lambda(\mathcal{E}ll, Q)))$ becomes the elliptic quantum group given by the dynamical elliptic R-matrix of Felder \cite{F1}, Gautam-Toledano Laredo \cite{GTL15}. 
\end{enumerate}
\end{thm}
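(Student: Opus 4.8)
The plan is to build the four assertions in sequence, treating parts (1)--(3) as sheaf-theoretic analogues of Theorem~\ref{thm:YZ1} and reserving the bulk of the effort for the comparison in (4).

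For part (1), I would first pin down the line bundles $\mathcal L_{\vec v_1,\vec v_2}$ as the Thom-bundle twists forced by elliptic pushforward along the correspondence $\mathfrak{E}\mathrm{xt}$, exactly as in the warm-up example with $\Th=D/S^1$. The associativity constraint for $\otimes_\hbar$ then reduces to a factorization (2-cocycle) identity: pulling the triple product back along the two bracketings $\Sigma_{\vec v_1+\vec v_2,\vec v_3}\circ(\Sigma_{\vec v_1,\vec v_2}\times\mathrm{id})$ and $\Sigma_{\vec v_1,\vec v_2+\vec v_3}\circ(\mathrm{id}\times\Sigma_{\vec v_2,\vec v_3})$, one must exhibit a canonical isomorphism of the two resulting line bundles on $E^{(\vec v_1)}\times E^{(\vec v_2)}\times E^{(\vec v_3)}\times E_\hbar$. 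Since the symmetrization maps are finite flat, this is an identity of divisors assembled from theta functions, which I expect to hold on the nose. The braiding of \cite[Theorem~3.3]{YZell} is the swap twisted by an elliptic R-matrix; the symmetry constraint $c_{\mathcal G,\mathcal F}\circ c_{\mathcal F,\mathcal G}=\mathrm{id}$ and the hexagon axioms again descend to theta-function identities encoding $\sigma^*\mathcal L_{\vec v_2,\vec v_1}\cong\mathcal L_{\vec v_1,\vec v_2}$, which I would verify divisor-by-divisor.

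For parts (2) and (3), the multiplication $\star=\overline\psi_*\circ\overline\phi^{\,*}$, the coproduct $\Delta$, and the action are transported from the $\CH$-constructions of \S\ref{sec:Construction}; the only new input is that in $\Ell$ every proper pushforward carries a Thom twist, which is precisely what $\otimes_\hbar$ absorbs. Associativity of $\star$ and coassociativity of $\Delta$ follow from the well-definedness of the iterated $\mathfrak{E}\mathrm{xt}$-correspondences together with the cocycle identity of part (1), while the bialgebra compatibility $\Delta\circ\star=(\star\otimes\star)\circ(1\otimes c\otimes 1)\circ(\Delta\otimes\Delta)$ is an excess-intersection and base-change computation on the fibre product of the two correspondences, with the braiding $c$ accounting for the transposition. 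Passing to the localized category $\calC^{\loc}$ is needed to invert the theta divisors appearing in the comultiplication and in the Drinfeld-double pairing, and the module axiom in (3) is the framed version of the same base-change argument applied to \eqref{eq:corr}.

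The heart of the matter, and the main obstacle, is part (4). Here I would first compute the elliptic shuffle formula --- the theta-function analogue of Proposition~\ref{prop:shuffle}, with each factor $A-B$ in $\fac$ replaced by its image under the elliptic formal group law, i.e. by a ratio of Jacobi theta functions --- and then restrict to the spherical subalgebra $\calP^{\sph}_\lambda$ generated by the degree-$e_i$ pieces, whose generating series are the Drinfeld currents $\mathfrak X^\pm_i(u)$. After applying $\Gamma_{\rat}$ to obtain an honest algebra of meromorphic sections, the structure constants become ratios of theta functions in which the Cartan coordinates on $E^{(\vec v)}$ play the role of Felder's dynamical variable; this is exactly why the resulting object is the \emph{dynamical} elliptic quantum group. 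The remaining task is to match these shuffle relations, together with the R-matrix extracted from the braiding of part (1), against the RLL-presentation of $E_{\hbar,\tau}(\g)$ and the Gautam--Toledano Laredo generators-and-relations description. I anticipate the real difficulty to be \textbf{bookkeeping}: fixing the normalization of the theta functions and the dynamical shift so that the Drinfeld-double pairing reproduces Felder's dynamical R-matrix \cite{F1} and the Gervais--Neveu--Felder equation \cite{GTL15}, and controlling the theta divisors so that the comparison takes place over the correct ring of meromorphic sections.
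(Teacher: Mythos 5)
A preliminary remark: this survey does not itself prove Theorem~\ref{conj:ell} --- the statement is imported wholesale from \cite{YZell}, and the surrounding text only supplies the ingredients (the tensor structure \eqref{equ:tensor on C}, the theta-function shuffle formula of \S\ref{sec:shuffle elliptic}, and the Drinfeld currents of \S\ref{subsec:ellCurrents}). Measured against those ingredients, your architecture for parts (1)--(3) is the intended one: the line bundles $\mathcal L_{\vec{v}_1,\vec{v}_2}$ are indeed the Thom-type twists forced by elliptic pushforward, associativity and the braiding reduce to divisor identities among theta functions, and the passage to $\calC^{\loc}$ is needed exactly to invert the $\vartheta(\lambda''-\lambda')$ denominators visible in $\fac_1$.

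The genuine gap is in part (4), and it is conceptual rather than bookkeeping. You assert that ``the Cartan coordinates on $E^{(\vec v)}$ play the role of Felder's dynamical variable.'' They do not: those coordinates are the spectral/equivariant parameters $z$, and the double built from them alone is a non-dynamical object. The dynamical variable is an \emph{extra} datum, recorded by the subscript in $\calP^{\sph}_\lambda$: one must twist the equivariant elliptic cohomology by the Poincar\'e line bundle $\mathbb{L}$ on the universal curve over $\calM_{1,2}$, introducing a second elliptic factor $\mathrm{Pic}(E_\tau)\ni\lambda$, so that the currents become $\mathfrak{X}^{\pm}_k(u,\lambda)\in\Gamma_{\rat}(\SH^{\mathbb{L}})$ depending on both $z_k$ and $\lambda_k$ (see \S\ref{subsec:ellCurrents} and the remark following the theorem). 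Without this twist the comparison with the dynamical R-matrix of Felder \cite{F1} and Gautam--Toledano Laredo \cite{GTL15} cannot close, since there is no variable left to carry the dynamical shift. Relatedly, your Drinfeld double is underspecified: as the remark after Theorem~\ref{thm:YZ1} indicates, one must either adjoin a Cartan part with a bialgebra pairing or replace it by the symmetric monoidal structure of part (1), and it is in that pairing that $\Gamma_{\rat}$ and the choice of meromorphic sections actually do their work; your plan should commit to one of these routes before the matching of relations in (4) can be carried out.
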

\begin{remark}
\begin{enumerate}
\item
In Theorem~\ref{conj:ell}(4), a version of the sheafified elliptic quantum group with dynamical twist $D(\calP^{\sph}_\lambda(\mathcal{E}ll, Q))$ is needed in order to recover the R-matrix of Felder-Gautam-Toledano Laredo. This twist is explained in detail in \cite[\S~10.2]{YZell}. 
Below we illustrate the flavour of this dynamical twist in  \S~\ref{subsec:ellCurrents}.

In particular, the abelian category of representations of $D(\calP(\mathcal{E}ll, Q))$ and  $D(\calP^{\sph}_\lambda(\mathcal{E}ll, Q))$  are both well-defined (see \cite[\S~9]{YZell} for the details). Furthermore, it is proved in {\it loc. cit.} that these two representation categories are equivalent as abelian categories. 
\item The details of the space of meromorphic sections are explained in \cite[\S~6]{YZell}.
\item Based on the above theorem, we define the sheafified elliptic quantum group to be the Drinfeld double of $\calP^{\sph}(\mathcal{E}ll, Q)$. 
\end{enumerate}
\end{remark}

\subsection{The shuffle formulas}
\label{sec:shuffle elliptic}
The shuffle formula of the elliptic quantum group is given by \eqref{eq:shuffle}, with the factor $A+B$ replaced by $\vartheta(A+B)$. The shuffle formula gives an explicit description of the elliptic quantum group, as well as its action on the Nakajima quiver varieties. In this section, we illustrate the shuffle description of the elliptic quantum group $E_{\tau, \hbar}(\mathfrak{sl}_2)$. Furthermore, we show  \eqref{eq:shuffle} applied to special cases coincides with the shuffle formulas in \cite[Proposition 3.6]{FRV} and \cite[Definition 5.9]{K}. 

When $\mathfrak{g}=\mathfrak{sl}_2$, the corresponding quiver is $\bigcirc$, with one vertex, no arrows.  Let $\SH_{\vec{w}=0}:=\calO_{\calH_{E}\times E_{\hbar}}=(\bigoplus_{n\in \N} \calO_{E^{(n)}})\boxtimes \calO_{E_{\hbar}}$. 
For any local sections $f\in \calO_{E^{(n)}}, g\in \calO_{E^{(m)}}$, by \eqref{eq:shuffle} and \eqref{eq:fac formula}, we have
\[
f\star g=\sum_{\sigma\in \Sh(n, m)} \sigma \left(f g \prod_{1\leq s\leq n, n+1\leq t\leq n+m} \frac{\vartheta(x_s-x_t+\hbar)}{\vartheta(x_s-x_t)}\right), 
\]
where $f\star g\in  \calO_{E^{(n+m)}}$ and $\Sh(n, m)$ consists of $(n, m)$-shuffles, i.e., permutations of $\{1,\cdots ,n+m\}$ that preserve the relative order of 
$\{1,\cdots ,n\}$ and $\{ n+1, \cdots, n+m\}$. The elliptic quantum group $E_{\tau, \hbar}^+(\mathfrak{sl}_2)$ is a subalgebra of $(\SH_{\vec{w}=0}, \star)$. 

In the following examples we consider $\SH$ with general $\vec{w}$.
\begin{example}
Assume the quiver is $\bigcirc$, with one vertex, no arrows. 
Let $\vec{v_1}=k'$, $\vec{v_2}=k''$, and $\vec{w_1}=n'$, $\vec{w_2}=n''$. 
Choose $t_1=\hbar$, and $t_2=0$. Applying the formula \eqref{eq:fac formula} to this case, we have
\begin{align*}
\fac_{k'+k'', n'+n''}&=\prod_{s=1}^{k'}
\prod_{t=k'+1}^{k'+k''}\frac{\vartheta(\lambda_s-\lambda_t+\hbar)}{\vartheta(\lambda_t-\lambda_s)}  \cdot 
\prod_{s=1}^{k'}
\prod_{t=n'+1}^{n'+n''}
\vartheta(z_t-\lambda_s+\hbar)
\prod_{s=1}^{n'}
\prod_{t=k'+1}^{k'+k''}
\vartheta(\lambda_t-z_s), 
\end{align*}
where $\vartheta(z)$ is the odd Jacobi theta function, normalized such that $\vartheta'(0)=1$. 
This is exactly the same formula as \cite[Proposition 3.6]{FRV}. \end{example}

\begin{example}
When $\mathfrak{g}=\mathfrak{sl}_{N}$, the corresponding quiver is 
$\xymatrix{ \bigcirc \ar[r]&\bigcirc \ar[r] &  \cdots  \ar[r]&\bigcirc}$, with $N-1$ vertices, and $N-2$ arrows. 
Consider the framed quiver
\[
Q^{\heartsuit}: \xymatrix{
 \bigcirc \ar[r]&\bigcirc \ar[r] & \cdots \ar[r]&\bigcirc 
\ar[d]  \\
&&& \square }
\]
Label the vertices of $Q^{\heartsuit}$ by $\{1, 2, \cdots, N-1, N\}$. Let $\vec{v_1}=(v_1^{(l)})_{l=1, \cdots, N-1}$, $\vec{v_2}=(v_2^{(l)})_{l=1, \cdots, N-1}$ be a pair of dimension vectors, and take the framing to be $\vec{w_1}=(0, \cdots, 0, n)$, $\vec{w_2}=(0, \cdots, 0, m)$. 
To simplify the notations, let $v_1^{(N)}=n$, $v_2^{(N)}=m$, and denote the variables 
$z_{\vec{w_1}}$ by $\{\lambda_{s}^{'(N)}\}_{s=1, \cdots, v_1^{(N)}}$, and $z_{\vec{w_2}}$ by $\{\lambda_{t}^{''(N)}\}_{t=1, \cdots, v_2^{(N)}}$. 
Applying the formula \eqref{eq:fac formula} to this case, we then have
\begin{align*}
\fac_{\vec{v_1}+\vec{v_2},\vec{w_1}+\vec{w_2}}
=\prod_{l=1}^{N-1} &\Bigg(\prod_{s=1}^{v_1^{(l)}}
\prod_{t=1}^{v_2^{(l)}}\frac{\vartheta(\lambda\rq{}^{(l)}_s-\lambda\rq{}\rq{}^{(l)}_t+t_1+t_2)}{\vartheta(\lambda\rq{}\rq{}^{(l)}_t-\lambda\rq{}^{(l)}_s)} \\
&
\cdot \prod_{s=1}^{v_1^{(l)}}
\prod_{t=1}^{v_2^{(l+1)}}
\vartheta(\lambda_t^{'' (l+1)}-\lambda_s^{'(l)}+ t_1)
\prod_{t=1}^{v_1^{(l+1)}}
\prod_{s=1}^{v_2^{(l)}}
\vartheta(\lambda_s^{''(l)}-\lambda_t^{'(l+1)}+t_2)
\Bigg). 
\end{align*}

Following  \cite[\S 5 (5.3)]{K}, we denote by $H_{\vec{v}+\vec{w}}(\lambda_{\vec{v}}, z_{\vec{w}} )$ the following element 
\[
H_{\vec{v}+\vec{w}}(\lambda_{\vec{v}}, z_{\vec{w}})=
\prod_{l=1}^{N-1}  \prod_{s=1}^{v^{(l)}}
\prod_{t=1}^{v^{(l+1)}}
\vartheta(\lambda_t^{(l+1)}-\lambda_s^{(l)}+ t_1).\]
Define $H_{\text{cross}}=
\frac{ H_{\vec{v_1}+\vec{v_2}+\vec{w_1}+\vec{w_2}}(\lambda_{\vec{v_1}}\cup \lambda_{\vec{v_2}}, z_{\vec{w_1}}\cup z_{\vec{w_2}})}{
H_{\vec{v_1}+\vec{w_1}}(\lambda_{\vec{v_1}}, z_{\vec{w_1}})\cdot H_{\vec{v_2}+\vec{w_2}}(\lambda_{\vec{v_2}}, z_{\vec{w_2}})}$. We have
\begin{align}
H_{\text{cross}}=&\prod_{l=1}^{N-1}  \Bigg(
\prod_{s=1}^{v_1^{(l)}}
\prod_{t=1}^{v_2^{(l+1)}}
\vartheta(\lambda_t^{'' (l+1)}-\lambda_s^{'(l)}+ t_1)
\prod_{t=1}^{v_1^{(l+1)}}
\prod_{s=1}^{v_2^{(l)}}
\vartheta(\lambda_t^{' (l+1)}-\lambda_s^{''(l)}+ t_1)
\Bigg) \notag\\
=&
\prod_{l=1}^{N-1}  \Bigg(
\prod_{s=1}^{v_1^{(l)}}
\prod_{t=v_1^{(l+1)}+1}^{v_1^{(l+1)}+v_2^{(l+1)}}
\vartheta(\lambda_t^{ (l+1)}-\lambda_s^{(l)}+ t_1)
\prod_{t=1}^{v_1^{(l+1)}}
\prod_{s=v_1^{(l)}+1}^{v_1^{(l)}+v_2^{(l)}}
\vartheta(\lambda_t^{ (l+1)}-\lambda_s^{(l)}+ t_1)
\Bigg). \label{H cross term}
\end{align}
Divide $\fac_{\vec{v_1}+\vec{v_2},\vec{w_1}+\vec{w_2}}$ by $H_{\text{cross}}$,
we obtain
\begin{align*}
&\frac{\fac_{\vec{v_1}+\vec{v_2},\vec{w_1}+\vec{w_2}}}{H_{\text{cross}}}
=(-1)^{v_1^{(l+1)}+v_2^{(l)}}
\prod_{l=1}^{N-1} \Bigg(\prod_{s=1}^{v_1^{(l)}}
\prod_{t=1}^{v_2^{(l)}}\frac{\vartheta(\lambda\rq{}^{(l)}_s-\lambda\rq{}\rq{}^{(l)}_t+t_1+t_2)}{\vartheta(\lambda\rq{}\rq{}^{(l)}_t-\lambda\rq{}^{(l)}_s)} 
\cdot \prod_{t=1}^{v_1^{(l+1)}}
\prod_{s=1}^{v_2^{(l)}}
\frac{\vartheta(\lambda_t^{'(l+1)}-\lambda_s^{''(l)}-t_2)}{
\vartheta(\lambda_t^{' (l+1)}-\lambda_s^{''(l)}+ t_1)
}
\Bigg). 
\end{align*}
This coincides with the formula in \cite[Definition 5.9]{K} when $t_1=-1, t_2=0$. 

In other words, 
consider the map 
\begin{align*}
\bigoplus_{\vec{v}\in \N^I, \vec{w}\in \N^I} \SH_{\vec{v},\vec{w}} &\to \bigoplus_{\vec{v}\in \N^I, \vec{w}\in \N^I} (\SH_{\vec{v},\vec{w}})_{\loc}, \,\ \text{given by} \\f(\lambda_{\vec{v}}, z_{\vec{w}})&\mapsto \frac{ f(\lambda_{\vec{v}}, z_{\vec{w}})}{H_{\vec{v}+\vec{w}}(\lambda_{\vec{v}}, z_{\vec{w}})}.
\end{align*}
It intertwines the shuffle product \eqref{eq:shuffle} (with theta-functions), and the shuffle product of  \cite[Definition 5.9]{K}.
\end{example}

\begin{remark}
\begin{enumerate}
\item 
In the work of \cite{FRV} in the $\mathfrak{sl}_2$ case, and \cite{K} in the $\mathfrak{sl}_N$ case, the shuffle formulas are used to obtain an inductive formula for the elliptic weight functions. 
Proposition \ref{prop:shuffle} provides a way to define the elliptic weight functions associated to a general symmetric Kac-Moody Lie algebra $\g$. 
\item 
In the above cases for $\mathfrak{sl}_N$ and the special framing, the elliptic weight functions are expected to be related to the elliptic stable basis in \cite{AO} (see also \cite{FRV, K}). Therefore, it is reasonable to expect an expression of the elliptic stable basis in terms of the shuffle product \eqref{eq:shuffle} (with theta-functions) for general quiver varieties.
\end{enumerate}
\end{remark}

\subsection{Drinfeld currents}\label{subsec:ellCurrents}
We now explain the Drinfeld currents in the elliptic case. The choice of an elliptic curve $E$ gives rise to the dynamical elliptic quantum group. 

Let $\calM_{1, 2}$ be the open moduli space of 2 pointed genus 1 curves. 
We write a point in $\calM_{1, 2}$ as $(E_\tau, \lambda)$, where $E_\tau=\C/\Z\oplus\tau\Z$, and $\lambda$ gives a line bundle $\mathbb{L}_{\lambda}\in \text{Pic}(E_\tau)$. 
Let $E$ be the universal curve on $\calM_{1, 2}$. There is a Poincare line bundle $\mathbb{L}$ on $E$, which has a natural rational section
\[
\frac{\vartheta(z+\lambda)}{\vartheta(z)\vartheta(\lambda)}
\]
where $z$ is the coordinate of $E_{\tau}$, and $\vartheta(z)$ is the Jacobi odd theta function, normalized such that $\vartheta'(0)=1$.

We can twist the equivariant elliptic cohomology $\Ell_G$ by the Poincare line bundle $\mathbb{L}$. For each simple root $e_k$, after twisting, we have 
$\SH_{e_k}^{\mathbb{L}}=\calO_{E^{(e_k)}}\otimes \mathbb{L}$. A basis of the meromorphic sections $\Gamma_{\rat}(\SH_{e_k}^{\mathbb{L}})$ consists of 
$\left\{ g^{(i)}_{\lambda_k}(z_k):= \frac{\partial^i}{\partial z_k^i}\left( \frac{\vartheta(z_k+\lambda_k)}{\vartheta(z_k)\vartheta(\lambda_k) }\right)\right\}_{i\in \N}$. 

Consider $\lambda=(\lambda_k)_{k\in I}$, and let
\[
\mathfrak{X}_{k}^+(u, \lambda):=
\sum_{i=0}^{\infty} g^{(i)}_{\lambda_k}(z_k) u^{i}=\frac{\vartheta(z_k+\lambda_k+u)}{\vartheta(z_k+u)\vartheta(\lambda_k)} \in \Gamma_{\rat}(\SH^{\mathbb{L}})[[u]], \,\ k\in I. 
\]
Similarly, we define series $\mathfrak{X}_{k}^-(u, \lambda)$, $\Phi_k(u)$. The series 
$\mathfrak{X}_{k}^+(u, \lambda)$, $\mathfrak{X}_{k}^-(u, \lambda)$, and  $\Phi_{k}(u)$ satisfy the relations of the elliptic quantum group of Gautam-Toledano Laredo \cite{GTL15}. 

\section{Relation with the affine Grassmannian}
In this section, we explain one unexpected feature of the sheafified elliptic quantum group, namely, its relation with the global loop Grassmannians over an elliptic curve $E$. 
We assume the quiver $Q$=(I, H) is of type ADE in this section. 

We collect some facts from \cite{Mirk, Mir2}. Let $C$ be a curve. An $I$-colored \textit{local space} $Z$ over $C$, defined in \cite[Section 2]{Mirk}, see also \cite[Section 4.1]{Mir2}, is a space $Z\to \mathcal{H}_{C\times I}$ over the $I$-colored Hilbert scheme of points of $C$, together with a consistent system of isomorphisms 
\[
Z_{D}\times Z_{D'} \to Z_{D \sqcup D'}, \,\ \text{for $D, D'\in \calH_{C\times I}$ with $D\cap D'=\emptyset$.} 
\]
Similarly, for a coherent sheaf $\calF$ on $\calH_{C\times I}$, a \textit{locality structure} of $\calF$ on $\calH_{C\times I}$ is a compatible system of identifications
\[
\iota_{D, D'}: \mathcal{F}_D\boxtimes \mathcal{F}_{D'}\cong \mathcal{F}_{D\sqcup D'}, \,\ \text{for $D, D'\in \calH_{C\times I}$ with $D\cap D'=\emptyset$.} 
\]

An example of local spaces is the zastava space $\mathcal{Z}\to \mathcal{H}_{C\times I}$, recollected in detail in \cite[Section 3]{Mirk} and \cite[Section 4.2.3]{Mir2}.  Here $C$ is a smooth curve.
In \cite{Mir3}, a  modular description of $\mathcal{Z}$ is given along the lines of Drinfeld's compactification. 
Let $G$ be the semisimple simply-connected group associated to $Q$, with the choice of opposite Borel subgroups $B = TN$ and $B^- = TN^-$ with the
joint Cartan subgroup $T$. Consider the Drinfeld's compactification $\mathcal{Y}_G$ of a point:
\[
\mathcal{Y}_G = G\backslash [(G/N^+)^{\aff} \times (G/N^-)^{\aff}] / T.
\]
The zastava space $\mathcal{Z} \to \mathcal{H}_{C\times I}$ for $G$ is defined as the moduli of generic maps from $C$ to $\mathcal{Y}_G$.
Gluing the zastava spaces, one get a loop Grassmannian $\mathcal{G}r$ as a local space over $\mathcal{H}_{C\times I}$, which is a refined version of the  Beilinson-Drinfeld Grassmannian, see \cite[Section 3]{Mirk}, \cite[Section 4.2.3]{Mir2}. 

Fix a point $c\in C$, and a dimension vector $\vec{v}\in\bbN^I$,  let $[c]\in C^{(\vec{v})}\subseteq \calH_{C\times I}$ be the special divisor supported on $\{c\}$. 
The fiber $\mathcal{Z}_{[c]}$ is the Mirkovi\'c-Vilonen scheme associated to the root vector $\vec{v}$, i.e., the intersection of closures of certain semi-infinite orbits in $G_{\mathbb{C}(\!(z)\!)}/G_{\mathbb{C}[\![z]\!]}$ \cite[Section 3]{Mirk}.

The maximal torus $T\subset G$ acts on $\mathcal{Z}$. There is a certain component in a torus-fixed loci $(\mathcal{Z})^T$, which gives a section 
$\mathcal{H}_{C\times I}\subset (\mathcal{Z})^T$. We denote this component by $(\mathcal{Z})^{T\circ}$. 
The tautological line bundle $\mathcal{O}_{\mathcal{G}r}(1) |_{\mathcal{Z}^{T\circ}}$ has a natural locality structure, and 
is described in \cite[Theorem 3.1]{Mirk}. 

We now take the curve $C$ to be the elliptic curve $E$. Let $\mathcal{G}r\to \calH_{E\times I}$ be the global loop Grassmannian over $\calH_{E\times I}$.
The following theorem relies on the description of the local line bundle on $\calH_{C\times I}$ in \cite{Mirk}, \cite[Section 4.2.1]{Mir2}. 
\begin{thm}[\cite{YZell} Yang-Zhao]
\label{thmC}
\leavevmode
\begin{enumerate}
\item The classical limit $\calP^{\sph}(\Ell, Q) |_{\hbar=0}$ is isomorphic to $\mathcal{O}_{\mathcal{G}r}(1) |_{\mathcal{Z}^{T\circ}}$ as sheaves on $\mathcal{H}_{E\times I}$. 
\item The Hall multiplication $\star$ on $\calP^{\sph}(\Ell, Q)|_{\hbar=0}$ is equivalent to the locality structure on $\mathcal{O}_{\mathcal{G}r}(1) |_{\mathcal{Z}^{T\circ}}$. 
\end{enumerate}
\end{thm}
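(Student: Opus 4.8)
The plan is to prove both parts by comparing, on each graded component $E^{(\vec v)}\subseteq \mathcal{H}_{E\times I}$, the shuffle description of $\calP^{\sph}(\Ell,Q)$ with Mirkovi\'c's description of $\mathcal{O}_{\mathcal{G}r}(1)|_{\mathcal{Z}^{T\circ}}$ from \cite[Theorem~3.1]{Mirk}. For part (1) I would first compute the classical limit directly from the theta-function version of \eqref{eq:shuffle}--\eqref{eq:fac formula}. The spherical subalgebra is generated by the simple-root pieces $\calP_{e_i}|_{\hbar=0}=\calO_E$, and its degree-$\vec v$ component is the image of the iterated Hall product. Setting $\hbar=0$ collapses the within-color deformation factor $\fac_1$ to the odd-theta sign $\vartheta(x)/\vartheta(-x)=-1$, while the cross-color factor $\fac_2$ survives as a product of theta functions $\vartheta(\lambda^{\out(h)}_s-\lambda^{\inc(h)}_t)$ indexed by the arrows $h\in H$. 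I would show that this data exhibits $\calP^{\sph}_{\vec v}|_{\hbar=0}$ as a \emph{rank-one} locally free sheaf on $E^{(\vec v)}$, namely the line bundle whose divisor on $E^{\vec v}$ is the union of the edge-antidiagonals prescribed by the off-diagonal Cartan matrix of $Q$, descended through the symmetrization map $\Sigma$ (the odd-theta signs making the descent to $E^{(\vec v)}$ consistent). The point is that, although the space of global sections grows like the graded dimension of $U(\mathfrak n^+)_{\vec v}$, the sheaf itself is of rank one.

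I would then match this line bundle with $\mathcal{O}_{\mathcal{G}r}(1)|_{\mathcal{Z}^{T\circ}}$. Both are determined on each $E^{(\vec v)}$ by the symmetric bilinear form attached to $Q$: on the CoHA side through the theta-divisor above, and on the loop-Grassmannian side through the pairing governing the tautological bundle on $\mathcal{G}r$. Comparing the two divisor classes, together with the trivializations singled out along the torus-fixed section $\mathcal{Z}^{T\circ}\cong\mathcal{H}_{E\times I}$, yields the isomorphism of sheaves claimed in (1).

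For part (2), I would restrict the classical Hall product to the open locus of disjoint colored divisors $D\cap D'=\emptyset$ inside $E^{(\vec v_1)}\times E^{(\vec v_2)}$. There every theta factor in $\fac|_{\hbar=0}$ is regular and nonvanishing, hence invertible, so that $\star|_{\hbar=0}$ specializes to an isomorphism $\mathcal{F}_D\boxtimes\mathcal{F}_{D'}\isom\mathcal{F}_{D\sqcup D'}$. I would identify this family with the factorization isomorphisms $\iota_{D,D'}$ of $\mathcal{O}_{\mathcal{G}r}(1)$ coming from the factorization structure of $\mathcal{G}r$ over $\mathcal{H}_{E\times I}$, and verify that the associativity of $\star$ matches the consistency of the locality system.

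The hard part will be the matching step: translating between the theta-function/shuffle description produced by the CoHA and Mirkovi\'c's geometric description via semi-infinite orbits and MV cycles in the loop Grassmannian. Establishing that the two divisor classes \emph{and} their locality structures coincide — not merely that the sheaves are abstractly isomorphic degree by degree — is where the real content lies; once the divisors and trivializations are matched, both the rank-one statement and the factorization compatibility in (2) should follow comparatively formally.
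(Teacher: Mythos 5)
Your proposal matches the strategy the paper attributes to \cite{YZell}: compute the $\hbar=0$ limit of the theta-function shuffle formula \eqref{eq:shuffle}--\eqref{eq:fac formula} to exhibit $\calP^{\sph}(\Ell,Q)|_{\hbar=0}$ as a local line bundle on $\calH_{E\times I}$ whose divisor is supported on the diagonals prescribed by the edges of $Q$, and then invoke Mirkovi\'c's classification of local line bundles by such diagonal divisors to identify it, together with its locality structure over disjoint divisors, with $\mathcal{O}_{\mathcal{G}r}(1)|_{\mathcal{Z}^{T\circ}}$. The ``hard matching step'' you flag at the end is precisely what the appeal to \cite[Theorem~3.1]{Mirk} and \cite[Section~4.2.1]{Mir2} is for: the classification reduces the comparison to matching diagonal divisors and locality data, so no direct work with semi-infinite orbits or MV cycles is required.
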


\begin{remark}
\leavevmode
\begin{enumerate}
\item Theorem \ref{thmC} is true when the curve $E$ is replaced by $\mathbb{C}$ (and $\mathbb{C^*}$), while the corresponding cohomological Hall algebra is modified to
$\calP^{\sph}(\CH, Q) |_{\hbar=0}$ (and $\calP^{\sph}(K, Q) |_{\hbar=0}$ respectively). The sheaf $\calP^{\sph}(\Ell, Q)$ deforms the local line bundle $\mathcal{O}_{\mathcal{G}r}(1) |_{\mathcal{Z}^{T\circ}}$. In the classification of local line bundles in \cite{Mirk},  $\mathcal{O}_{\mathcal{G}r}(1) |_{\mathcal{Z}^{T\circ}}$ is characterized by certain diagonal divisor of $\calH_{E\times I}$ \cite[Section 4.2.1]{Mir2}. 
As a consequence of Theorem \ref{thmC}, the shuffle formula of $\calP^{\sph}(\Ell, Q)$ gives the $\hbar$-shifting of the diagonal divisor of $\calH_{E\times I}$ that appears in $\mathcal{O}_{\mathcal{G}r}(1) |_{\mathcal{Z}^{T\circ}}$. 
\item When the base is $\mathcal{H}_{\mathbb{C}\times I}$, and cohomology theory is the Borel-Moore homology $H_{\BM}$, Theorem \ref{thmC} (1) has a similar flavour as \cite[Theorem 3.1]{BFN}. Here, we only consider $\calP^{\sph}(H_{\BM}, Q) |_{\hbar=0}$ and $\mathcal{O}_{\mathcal{G}r}(1) |_{\mathcal{Z}^{T\circ}}$ as sheaves of abelian groups. 
By Theorem \ref{thm:YZ1}, $(\calP^{\sph}(H_{\BM}, Q), \star)$ is isomorphic to the positive part of the Yangian, which is in turn related to 
$\mathcal{O}_{\mathcal{G}r}(1) |_{\mathcal{Z}^{T\circ}\to \mathcal{H}_{\mathbb{C}\times I}}$ by Theorem \ref{thmC}. 
\end{enumerate}
\end{remark}

\subsection*{Acknowledgement}
Y.Y. would like to thank the organizers of the MATRIX program \textit{Geometric R-Matrices: from Geometry to Probability} for their kind invitation,
and many participants of the program for useful discussions, including Vassily Gorbounov, Andrei Okounkov, Allen Knutson, Hitoshi Konno, Paul Zinn-Justin. Proposition \ref{prop:shuffle} and Section \ref{sec:shuffle elliptic} are new, for which we thank Hitoshi Konno for interesting discussions and communications. These notes were written when both authors were visiting the Perimeter Institute for Theoretical Physics (PI). We are grateful  to PI for the hospitality. 

\newcommand{\arxiv}[1]
{\texttt{\href{http://arxiv.org/abs/#1}{arXiv:#1}}}
\newcommand{\doi}[1]
{\texttt{\href{http://dx.doi.org/#1}{doi:#1}}}
\renewcommand{\MR}[1]
{\href{http://www.ams.org/mathscinet-getitem?mr=#1}{MR#1}}

\end{document}